\newcommand{\qed}{\bull \medskip}
\newtheorem{theorem}{Theorem} 
\newtheorem{proposition}{Proposition} 
\newtheorem{lemma}{Lemma}
\newtheorem{definition}{Definition}
\newtheorem{example}{Example}
\def\be{\begin{example}}
\def\ee{\end{example}}
\def\bt{\begin{theorem}}
\def\et{\end{theorem}\bigskip}
\def\bl{\begin{Lemma}}
\def\el{\end{Lemma}\bigskip}
\def\ep{\end{Proposition}\bigskip}
\def\bp{\begin{Proposition}}
\def\bd{\begin{definition}}
\def\ed{\end{definition}}
\newcommand{\alglist}{
\begin{list}{Step 1}
{\setlength{\leftmargin}{1.1 in}\setlength{\labelwidth}{1.0 in}} }
\def\qed{\hfill {$ \Box $} \medskip}
\begin{document}
\title{\bf A Lower Bound for the Cardinality of Function Basis of Tensor Invariants
\footnote{This work is partially supported by National Science Foundation of China (Grant No. 11771328), and the Hong Kong Research Grant Council (Grant Nos. PolyU 15302114, 15300715, 15301716 and 15300717).}}
 \author{Shenglong Hu\footnote{Department of Mathematics, School of Science, Hangzhou Dianzi University, Hangzhou 310018, China. \textit{On leave from} School of Mathematics, Tianjin University, Tianjin 300350, China. Email: timhu@tju.edu.cn.}\quad  and \quad Liqun Qi\footnote{Department of Applied Mathematics, The Hong Kong Polytechnic University, Hung Hom, Kowloon, Hong Kong. Email: liqun.qi@polyu.edu.hk.}}

\date{\today}
\maketitle

{\large

\begin{abstract}
In this article, we give a proof for that the cardinality of a function basis of the invariants for a finite dimensional real vector space by a compact group is lower bounded by the intuitive difference of the dimensions of the vector space and the group. An application is given to the space of third order three dimensional symmetric and traceless tensors, showing that each minimal integrity basis is an irreducible function basis, which solves a problem in applied mechanics.
\medskip

{\bf Keywords: function basis, invariant, tensor}
\end{abstract}

\medskip

\section{Introduction}\label{sec:intro}
Tensors play important role in applied mechanics, invariants of tensors are essential tools for studying physical objects, and bases of invariants of a given tensor space are of particular interests and foundation for the sake of simplification in turn \cite{z}. In the literature, invariants of tensors in applied mechanics are mainly categorized into two classes: polynomial (algebraic) invariants and functional invariants. Polynomial invariants refer to invariants of the tensor space which are polynomials in terms of the coordinates in a basis of the tensor space. Likewise, functional invariants are more general, referring to invariants which are functions of the coordinates.

Polynomial invariants of a tensor space have been studied over a century, dating back to prominent mathematicians such as Gordan, Hilbert, Weyl, etc. Extensive classical books on these can be found \cite{w,o,h,s93}. We refer to Section~\ref{sec:tensor} for a brief introduction on polynomial invariants of tensors. For several tensor spaces, (minimal) integrity bases are known for polynomial invariants \cite{sb,oka}, and algorithms existing for computing an integrity basis for the ring of polynomial invariants of a given tensor space \cite{s93}. While, the cardinality of a minimal integrity basis may consist of hundreds of invariants \cite{oka,bp}. It is thus still difficult to analyze or compute from such a huge number of invariants, and makes the simplificative purpose less convincing. Moreover, the cardinality of a minimal integrity basis is the same for a tensor space \cite{s}. Hence, people turn to the more general function basis, hoping that an irreducible function basis will have a cardinality with significantly a smaller number of invariants.

However, in general function bases and irreducible function bases of a given tensor space are much difficult to determine \cite{z,oka,cqzz}. A good news is that in some cases, an irreducible function basis can be derived from an integrity basis. Nevertheless, there still lacks a systematic way to determine an irreducible function basis from a (minimal) integrity basis. Before solving the question of determining irreducible function bases, a basic question needs an answer first: what is the cardinality of an irreducible function basis of a tensor space? It turns out that this is a question as hard as determining the irreducible function basis. Thus, instead one ask the following question: \textit{is there any lower or upper bounds for the cardinality of an irreducible function basis of a tensor space}?

In the article, we are trying to answer partially this basic question and giving a lower bound for the cardinality of a function basis.

The rest article is organized as follows. Preliminaries on terminologies and tensor invariants will be presented in Section~\ref{sec:preliminary}. Section~\ref{sec:function} is for proving the main theorem, showing that the cardinality of a function basis is lowered bounded by the the dimension of the tensor space minus the dimension of the acting group, under mild assumptions.
Section~\ref{sec:app} will give an application of the lower bound result, showing that each minimal integrity basis is an irreducible function basis.

\section{Preliminaries}\label{sec:preliminary}
In this section, we will present necessary notions and results from tensor invariant theory and quotient manifolds for the preparation of the main theory in Section~\ref{sec:function}.
\subsection{Tensor Invariants}\label{sec:tensor}
Let $m>1$ and $n>1$ be given integers. The space of real tensors $\mathcal A$ of order $m$ and dimension $n$ is formed by all tensors (a.k.a. hypermatrices) with entries $a_{i_1\dots i_m}\in\mathbb R$, the field of real numbers, for all $i_j\in\{1,\dots,n\}$ and $j\in\{1,\dots,m\}$. It is denoted as $\operatorname{T}(m,n)$. Let $\operatorname{Gl}(n,\mathbb R)\subset\mathbb R^{n\times n}$ be the \textit{general linear group} of real matrices. Let $G\subseteq \operatorname{Gl}(n,\mathbb R)$ be a subgroup. We then have a natural group representation $G\rightarrow \operatorname{Gl}(\operatorname{T}(m,n),\mathbb R)$, the real general linear group of the linear space $\operatorname{T}(m,n)$, via
\[
(g\cdot\mathcal T)_{j_1\dots j_m}:=\sum_{i_1}^n\dots\sum_{i_m=1}^ng_{j_1i_1}\dots g_{j_mi_m}t_{i_1\dots i_m}.
\]
A linear subspace $V$ of $\operatorname{T}(m,n)$ is \textit{$G$-stable} if $g\cdot v\in V$ for all $g\in G$ and $v\in V$.

Of particular interests in this article are the compact subgroups $\operatorname{O}(n,\mathbb R)$ (the orthogonal group) and $\operatorname{SO}(n,\mathbb R)$ (the special orthogonal group), both of which are Lie groups \cite{v}.

In $\operatorname{T}(m,n)$, the subspace of symmetric tensors $\operatorname{S}(m,n)$ is $\operatorname{Gl}(n,\mathbb R)$-stable, and thus $G$-stable for every subgroup $G$. Likewise, inside $\operatorname{S}(m,n)$, the subspace of symmetric and traceless tensors $\operatorname{St}(m,n)$ is $\operatorname{O}(n,\mathbb R)$-stable, thus $\operatorname{Ol}(n,\mathbb R)$-stable. Recall that a symmetric tensor $\mathcal T\in \operatorname{S}(m,n)$ is traceless if
\[
\sum_{i=1}^nt_{iii_3\dots i_m}=0\ \text{for all }i_3,\dots,i_m\in\{1,\dots,n\}.
\]
A well-known fact is that the dimension of $\operatorname{S}(m,n)$ as a linear space is ${n+m-1\choose n-1}$, and that of $\operatorname{St}(m,n)$ is ${n+m-1\choose n-1}-{n+m-3\choose n-1}$.

Associated to a linear subspace $V\subseteq\operatorname{T}(m,n)$ is an algebra $\mathbb R[V]$, generated by the dual basis of $V$. Once a basis of $V$ is fixed, an element $f\in \mathbb R[V]$ can be viewed as a polynomial in terms of the coefficients of $v\in V$ in that basis. Let $G\subseteq \operatorname{Gl}(n,\mathbb R)$ be a subgroup and $V$ be $G$-stable. Then, we can induce a group action of $G$ on $\mathbb R[V]$ via
\[
(g\cdot f)(v)=f(g^{-1}\cdot v)\ \text{for all }g\in G\ \text{and }v\in V.
\]
With this group action, some elements of $\mathbb R[V]$ are fixed points for the whole $G$, i.e.,
\[
g\cdot f=f\ \text{for all }g\in G,
\]
which form a subring $\mathbb R[V]^G$ of $\mathbb R[V]$ \cite{w,o}. Elements of $\mathbb R[V]^G$ are \textit{invaraints} of $V$ under the action of $G$. It is well-known that $\mathbb R[V]^G$ is finitely generated. A generator set is called an \textit{integrity basis}.  In an integrity basis, if none of the generators is a polynomial of the others, it is a \textit{minimal integrity basis}. Given a subspace $V$ and group $G$, minimal integrity bases may not be unique, but their cardinalities are the same as well as the lists of degrees of the generators \cite{s}. Invariants in $\mathbb R[V]^G$ are polynomials, always referred as \textit{algebraic invaraints}.

Likewise, one can consider \textit{function invariants} \cite{o}. A function $f\colon V\rightarrow\mathbb R$ is an invariant if
\[
f(v)=f(g\cdot v)\ \text{for all }g\in G.
\]
The set of function invariants of $V$ is denoted as $\mathcal I(V)$.
If there is a set of generators such that each function invariant can be expressed as a function of the generators, it is called a \textit{function basis}. Similarly, if none of the generators is a function of the others in a function basis, it is called an \textit{irreducible function basis}.

\subsection{Quotient Manifold by Lie Groups}\label{sec:quotient}
A real vector space $V$ of finite dimension has a natural manifold structure. Any given equivalence relation $\sim$ on $V$ defines a quotient structure with elements being the \textit{equivalence classes}
\[
V/\sim:=\{[v]\mid v\in V\}\ \text{with }[v]:=\{u\in V\mid v\sim u\}.
\]
The set $V/\sim$ is the \textit{quotient} of $V$ by $\sim$, and $V$ is the \textit{total space} of $V/\sim$.
The quotient $V/\sim$ is a \textit{quotient manifold} if the natural projection $\pi : V\rightarrow V/\sim$ is a submersion.
$V/\sim$ admits at most one manifold structure making it being a quotient manifold \cite[Proposition~3.4.1]{ams}. It may happen that $V/\sim$ has a manifold structure but fails to be a quotient manifold. Whenever $V/\sim$ is indeed a quotient manifold, we call the equivalence relation $\sim$ \textit{regular}.

Let $G$ be any compact Lie group and $V$ a finite dimensional real linear space. Suppose that $V$ is a representation of $G$, i.e., there is a group homomorphism $G\rightarrow\operatorname{Gl}(V,\mathbb R)$. Then, there is a natural equivalence relation given by $G$ as
\[
v\sim u\ \text{if and only if }g\cdot v=u\ \text{for some }g\in G\ \text{and } \text{for all }v, u\in V.
\]
The quotient under this equivalence is sometimes denoted as $V/G$, which is the set of orbits of the group action of $G$ on $V$. Suppose in the following that the group action is continuous.
Then, with the compactness of $G$, it can be shown that $V/G$ is a quotient smooth manifold, since the graph set
\[
\{(v,u)\mid [v]=[u]\}\subset V\times V
\]
is closed \cite[Proposition~3.4.2]{ams}.

Note that the fibre of the natural projection $\pi$ is the equivalence class $\pi^{-1}(\pi(v))=[v]$ for each $v\in V$. If $[v]$ is not a discrete set of points for some $v\in V$, then the dimension of $V/\sim$ is strictly smaller than the dimension of $V$ \cite[Proposition~3.4.4]{ams}.

In the following, we consider subspaces of  the linear space of tensors of order $m$ and dimension $n$, i.e., $V\subseteq \operatorname{T}(m,n)$.
\begin{lemma}\label{lem:dimension}
Let $V\subseteq\operatorname{T}(m,n)$ be a linear space containing $\operatorname{St}(m,n)$ and $G=\operatorname{O}(n,\mathbb R)$ or $\operatorname{SO(n,\mathbb R)}$. Then, we have $\operatorname{dim}(V/G)<\operatorname{dim}(V)$, and
\begin{equation}\label{eq:dim}
\operatorname{dim}(V/G)\geq \operatorname{V}-\operatorname{dim}(G).
\end{equation}
\end{lemma}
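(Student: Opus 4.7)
The plan is to invoke the quotient-manifold framework summarized in Section~\ref{sec:quotient}. Since $V/G$ is a quotient manifold, the canonical projection $\pi\colon V\rightarrow V/G$ is a submersion, so for every $v\in V$ the fiber $\pi^{-1}(\pi(v))=G\cdot v$ is a submanifold of $V$ of dimension $\operatorname{dim}(V)-\operatorname{dim}(V/G)$. On the other hand, the orbit $G\cdot v$ is the image of the smooth map $g\mapsto g\cdot v$ from $G$ into $V$, so by the standard orbit-stabilizer identity for compact Lie group actions, $\operatorname{dim}(G\cdot v)=\operatorname{dim}(G)-\operatorname{dim}(G_v)\leq\operatorname{dim}(G)$, where $G_v$ denotes the stabilizer. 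Equating the two expressions for $\operatorname{dim}(G\cdot v)$ immediately yields the desired lower bound $\operatorname{dim}(V/G)\geq\operatorname{dim}(V)-\operatorname{dim}(G)$.

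For the strict inequality $\operatorname{dim}(V/G)<\operatorname{dim}(V)$, I plan to apply \cite[Proposition~3.4.4]{ams} as already cited above the lemma: it suffices to exhibit a single $v\in V$ whose orbit $[v]=G\cdot v$ is not a discrete set, or equivalently whose stabilizer $G_v$ has dimension strictly less than $\operatorname{dim}(G)$. Because $V\supseteq\operatorname{St}(m,n)$, it is enough to find such a $v$ inside $\operatorname{St}(m,n)$. The key fact is that $\operatorname{St}(m,n)$ is a non-trivial irreducible representation of $\operatorname{O}(n,\mathbb R)$ -- it is the space of harmonic symmetric tensors, and both $m>1$ and $n>1$ -- so the only vector fixed by the identity component $G^0=\operatorname{SO}(n,\mathbb R)$ should be $0$. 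Hence for any non-zero $v\in\operatorname{St}(m,n)$ the connected stabilizer $(G_v)^0$ is a proper closed subgroup of $G^0$, so $G\cdot v$ is positive-dimensional and the hypothesis of Proposition~3.4.4 is fulfilled.

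The main obstacle I anticipate is establishing the fixed-vector claim for the connected group $\operatorname{SO}(n,\mathbb R)$ rather than for $\operatorname{O}(n,\mathbb R)$: in low dimension, in particular $n=2$, the harmonic space may split under $\operatorname{SO}(n,\mathbb R)$ into weight subspaces, and one has to check that none of these summands contains a nonzero $\operatorname{SO}(n,\mathbb R)$-fixed vector. To side-step the representation-theoretic discussion entirely, I would instead supply a direct verification with an explicit test tensor: take $v$ to be the traceless projection of $e_1^{\otimes m}$ inside $\operatorname{St}(m,n)$ and compute the action of the one-parameter rotation subgroup in the $(e_1,e_2)$-plane on $v$. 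A short calculation shows this produces a smooth non-constant curve in $G\cdot v$ passing through $v$, so the orbit cannot be discrete, and the lemma follows.
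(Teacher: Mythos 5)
Your argument is correct and shares the paper's overall skeleton, but it differs in one step in a way worth noting. For the inequality $\operatorname{dim}(V/G)\geq \operatorname{dim}(V)-\operatorname{dim}(G)$ you and the paper do essentially the same thing: the paper invokes \cite[Proposition~3.4.4]{ams} to write $\operatorname{dim}(V/G)=\operatorname{dim}(V)-\operatorname{dim}([v])$ and bounds $\operatorname{dim}([v])\leq\operatorname{dim}(G)$, while you phrase the same fact through the submersion property of $\pi$ (constant fibre dimension) plus the orbit--stabilizer identity; these are interchangeable, and both inherit the same reliance on the premise that $V/G$ really is a quotient manifold. The genuine difference is how the non-discrete orbit is produced. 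The paper argues softly: the stabilizers cannot all equal $G$, so some orbit has at least two points, and since $\operatorname{SO}(n,\mathbb R)$ is connected a path $g(t)\cdot v$ joins any two orbit points, so such an orbit cannot be discrete; the case $G=\operatorname{O}(n,\mathbb R)$ is then deduced from its identity component. You instead exhibit an explicit witness, the traceless projection of $e_1^{\otimes m}$, and rotate it in the $(e_1,e_2)$-plane to get a non-constant smooth curve inside the orbit. Your route buys concreteness and handles $\operatorname{O}(n,\mathbb R)$ and $\operatorname{SO}(n,\mathbb R)$ simultaneously (the rotation subgroup lies in both), and it completely sidesteps the fixed-vector/irreducibility discussion you were worried about --- indeed that worry is unnecessary even in your own framework, since (as the paper's proof shows) one never needs irreducibility of $\operatorname{St}(m,n)$, only that the action is nontrivial and the acting group connected. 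The small debt you take on is the ``short calculation'' that the rotated curve is non-constant, i.e.\ that the harmonic part of $(u^{\mathsf T}x)^m$ genuinely depends on $u$; this is true for all $m,n>1$ (it is the zonal harmonic with pole $u$), so the step does not fail, but it is the one place where your proof requires a verification the paper's soft connectedness argument avoids.
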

\begin{proof}
By \cite[Proposition~3.4.4]{ams}, if there is one point $v\in V$ such that $[v]$ is not a set of discrete points, then $\operatorname{dim}(V/G)<\operatorname{dim}(V)$, and $\operatorname{dim}(V/G)=\operatorname{V}-\operatorname{dim}([v])$, where $[v]$ is regarded as an embedded submanifold of $V$.

Note that $[v]$ is the orbit of $G$ acting on the element $v$. Thus, the dimension of $[v]$ cannot exceed the dimension of $G$. Consequently, the dimension bound \eqref{eq:dim} follows if we can find a point $v\in V$ such that $[v]$ is not a discrete set of points.

First of all, we show that $[v]$ cannot be a discrete set of points for the group $G=\operatorname{SO(n,\mathbb R)}$ for some $v\in V$.

It is easy to see that the stabilizers $G_v=G$ cannot hold through out $v\in V$. Thus, there exists an orbit $[v]$ with more than one element. Suppose that $[v]$ is a discrete set of more than two points.
For any given two discrete points $v_1,v_2\in [v]$, there exist $g_1,g_2\in G$ such that
\[
v_i=g_i\cdot v\ \text{for all }i=1,2
\]
by the definition of $[v]$. Since $\operatorname{SO(n,\mathbb R)}$ is a connected manifold, there is a smooth curve $g(t)$ starting from $g(0)=g_1$ ending at $g(1)=g_2$. By the definition,
\[
g(t)\cdot v\in [v]\ \text{for all }t\in [0,1].
\]
Since the group action is smooth, we see that $v_1$ and $v_2$ is thus connected, contradicting the discreteness.

Since $\operatorname{SO(n,\mathbb R)}$ is one half connected component of  $\operatorname{O(n,\mathbb R)}$, the result for  $\operatorname{O(n,\mathbb R)}$ follows immediately.
\qed
\end{proof}

Note that both $\operatorname{O(n,\mathbb C)}$ and
$\operatorname{SO(n,\mathbb C)}$ are algebraic varieties.

\section{Cardinality of Function Basis}\label{sec:function}

The next result is \cite[Theorem~11.112]{v}, see also the classical book \cite{w}.
\begin{lemma}[Separability]\label{lem:sep}
Let $G$ be a compact group and $V$ a real vector space representing $G$. Then the orbits of $G$ acting on $V$ are separated by the invariants $\mathbb R\mathbb [V]^G$.
\end{lemma}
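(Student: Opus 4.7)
The plan is to show that for any two points $v, u \in V$ lying in distinct $G$-orbits, one can exhibit a polynomial invariant $f \in \mathbb{R}[V]^G$ with $f(v) \neq f(u)$. My strategy combines three standard tools: compactness of orbits, a topological separation followed by Stone--Weierstrass approximation, and the Reynolds averaging operator associated to the normalized Haar measure on $G$ (which exists precisely because $G$ is compact).

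First I would verify that the two orbits $[v]$ and $[u]$ are disjoint compact subsets of $V$. Compactness is immediate, since each orbit is the continuous image of $G$ under the map $g \mapsto g\cdot v$, and disjointness is the hypothesis that $v, u$ lie in different equivalence classes. Next, by a standard topological separation argument (for instance Urysohn's lemma in the Hausdorff space $V$, or simply the distance functions to the two compact sets), I can produce a continuous function $\varphi : V \to \mathbb{R}$ with $\varphi \equiv 0$ on $[v]$ and $\varphi \equiv 1$ on $[u]$.

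The next step is to replace $\varphi$ by a polynomial. Choose a compact ball $B \subset V$ containing $[v] \cup [u]$ that is $G$-invariant (e.g.\ any ball centered at the origin in a $G$-invariant inner product, which exists by averaging). By the Stone--Weierstrass theorem on $B$, I can find a polynomial $p \in \mathbb{R}[V]$ with $\sup_{B} |p - \varphi| < 1/3$, so that $|p| < 1/3$ on $[v]$ and $|p - 1| < 1/3$ on $[u]$. Then I would define the Reynolds average
\[
f(x) := \int_G p(g^{-1}\cdot x)\, dg,
\]
where $dg$ is the normalized Haar measure. Left-invariance of $dg$ gives $f(h\cdot x) = f(x)$ for all $h \in G$, so $f$ is an invariant. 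Because the $G$-action on $V$ is linear, $p \circ g^{-1}$ has the same degree as $p$ and its coefficients are continuous functions of $g$; integrating these coefficients against $dg$ yields honest real numbers, so $f$ is a genuine polynomial, i.e.\ $f \in \mathbb{R}[V]^G$. Finally $f(v)$ and $f(u)$ are convex combinations of the values of $p$ on $[v]$ and $[u]$ respectively, so the $1/3$ bounds force $f(v) \neq f(u)$.

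The main technical obstacle is justifying that the averaged object $f$ really lies in $\mathbb{R}[V]^G$ and not merely in the ring of continuous invariant functions; this rests on the linearity of the $G$-representation, which ensures that each monomial of $p$ in the coordinates of $x$ is transformed into a $g$-dependent linear combination of monomials of the same degree, whose coefficients are continuous (hence integrable) in $g$. Once this is in hand, the separation conclusion follows directly from the three-step inequality above, and the lemma is established.
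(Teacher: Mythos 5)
Your argument is correct. Note, however, that the paper does not prove this lemma at all: it is quoted as Theorem~11.112 of Vinberg's book \cite{v} (see also Weyl \cite{w}) and used as a black box. What you have written is essentially the classical proof that those references give: the two orbits are disjoint compact sets (continuous image of the compact group under a continuous linear action), a continuous separating function exists, Weierstrass/Stone--Weierstrass approximation on a compact set containing both orbits replaces it by a polynomial, and the Reynolds operator $f(x)=\int_G p(g^{-1}\cdot x)\,dg$ with normalized Haar measure projects that polynomial into $\mathbb R[V]^G$ while preserving the $1/3$-separation of values, since each $g^{-1}\cdot v$ stays in $[v]$ and each $g^{-1}\cdot u$ stays in $[u]$. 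Your handling of the one delicate point --- that the average is again a polynomial, because the linear action sends each monomial to a combination of monomials of the same degree with coefficients depending continuously on $g$ --- is exactly right, and the left-invariance computation for $G$-invariance of $f$ is also correct. Two small remarks: the $G$-invariance of the ball $B$ is not actually needed (only that $B$ contains both orbits, since the averaged evaluations never leave the orbits), and your use of a compact ball implicitly assumes $\dim V<\infty$, which is the setting in which the paper applies the lemma, so nothing is lost. What your self-contained route buys over the paper's citation is transparency: it makes visible that compactness of $G$ enters exactly twice, through compactness of orbits and through the existence of the averaging measure, which is also why the lemma can fail for non-compact (e.g.\ complex algebraic) groups, as the paper remarks.
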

The conclusion may fail in the complex case.

The concepts of function invariants and functional independence of invariants can be found in classical textbooks, see for example \cite[Page 73]{o}.

The analysis for integrity and minimal integrity bases of $V$ for some $G$ is more sophisticated and approachable than function basis. Nevertheless, an exciting fact that an integrity basis is also a function basis holds in most interesting cases.
We will present this result in Theorem~\ref{thm:function}.
\begin{theorem}[Function Basis]\label{thm:function}
Let $G$ be a compact group and $V$ a finite dimensional real linear vector space representing $G$. Then, any integrity basis of $\mathbb R[V]^G$ is a function basis.
\end{theorem}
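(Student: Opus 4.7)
The plan is to use the separability result (Lemma 2) as the essential bridge between algebraic and functional generation. Let $\{p_1,\dots,p_k\}$ be an integrity basis of $\mathbb R[V]^G$, and consider the assembled map $P\colon V\rightarrow\mathbb R^k$ defined by $P(v):=(p_1(v),\dots,p_k(v))$. My first task is to show that the fibres of $P$ are exactly the $G$-orbits, i.e., $P(v)=P(u)$ if and only if $v\sim u$. The forward inclusion (orbits $\subseteq$ fibres) is immediate, since each $p_i$ is $G$-invariant and thus constant on orbits.

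The converse is the main point. Suppose $v$ and $u$ lie in distinct $G$-orbits. By Lemma~\ref{lem:sep}, there exists some polynomial invariant $q\in\mathbb R[V]^G$ with $q(v)\neq q(u)$. Since $\{p_1,\dots,p_k\}$ is an integrity basis, there is a polynomial $Q\in\mathbb R[x_1,\dots,x_k]$ with $q=Q(p_1,\dots,p_k)$. Then
\[
Q(P(v))=q(v)\neq q(u)=Q(P(u)),
\]
which forces $P(v)\neq P(u)$. Hence $P$ separates the $G$-orbits, and the induced map $\widetilde P\colon V/G\rightarrow \mathbb R^k$ is injective.

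It is now straightforward to conclude. Let $f\in\mathcal I(V)$ be any function invariant. Since $f$ is constant on each $G$-orbit, it descends to a well-defined function $\widetilde f\colon V/G\rightarrow \mathbb R$. Using the injection $\widetilde P$, one can define $F\colon P(V)\rightarrow\mathbb R$ by $F(P(v)):=f(v)$, which is unambiguous precisely because $P(v)=P(u)$ implies $v\sim u$, hence $f(v)=f(u)$. Then $f=F\circ P$, i.e., $f$ is expressed as a function of $p_1,\dots,p_k$, proving that $\{p_1,\dots,p_k\}$ is a function basis.

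I expect no serious obstacle: the only non-trivial input is the separability of orbits by polynomial invariants, which is provided by Lemma~\ref{lem:sep} (and which genuinely relies on compactness of $G$ via the Haar-averaging trick in its proof). If the paper's notion of ``function'' implicitly required continuity or smoothness, the argument above still produces a well-defined $F$ on the image $P(V)\subseteq \mathbb R^k$, but regularity of $F$ would require additional work, typically via Schwarz's theorem on smooth invariants. Since the definition of function basis stated in Section~\ref{sec:preliminary} asks only that $f$ be expressible as \emph{some} function of the generators, the set-theoretic factorization through $P$ suffices.
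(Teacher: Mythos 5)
Your proposal is correct and follows essentially the same route as the paper: both use the separability Lemma~\ref{lem:sep} together with the fact that every polynomial invariant is generated by the integrity basis to show the induced map $V/G\rightarrow\mathbb R^k$ is injective, and then factor any function invariant through the image of that map. Your write-up is in fact somewhat more explicit than the paper's (spelling out the step $q=Q(p_1,\dots,p_k)$ and the well-definedness of $F$), but the underlying argument is identical.
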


\begin{proof}It is well-known that the ring of polynomial invariants $\mathbb R[V]^{G}$ is finitely generated, whose minimal set of generators is an integrity basis.

The orbits of $G$ on $V$ are separable, i.e., $p(u)=p(v)$ for all $p\in \mathbb R[V]^{G}$ if and only if $u=g\cdot v$ for some $g\in G$ by Lemma~\ref{lem:sep}.  Let $\mathcal P:=\{p_1,\dots,p_r\}$ be an integrity basis. We have a map
\[
\mathbb P: V\rightarrow \mathbb P(V)\ \text{with }v\mapsto (p_1(v),\dots,p_r(v))^\mathsf{T},
\]
where $\mathbb P(V)$ is the image of $\mathbb P$ on $V$.
Actually, this map is defined over $V/G$, as each $p_i\in \mathcal P$ is an invariant. Moreover, this map, with $G/V\rightarrow\mathbb P(V)$, is onto and one to one, following from the separability of $\mathbb R[V]^{G}$ on $V$ and the fact that each algebraic invariant is generated by $p_1,\dots,p_r$. Thus, there is an inverse map
\[
\mathbb P^{-1}: \mathbb P(V)\rightarrow G/V.
\]
In summary,
we can conclude that $[v]$ (the equivalent class in $G/V$) for any $v\in V$ can be determined by the values of $p_1(v),\dots,p_r(v)$.
On the other side, each invariant in $\mathcal I(V)$, the set of invariants of $V$, is a function over $V/G$. Thus, we have a chain of functions
\[
V\rightarrow\mathbb P(V)\leftrightarrow V/G\rightarrow \mathbb R.
\]
Read throughout the above chain, we get that the integrity basis $\mathcal P$ gives a function basis for $\mathcal I(V)$.
\qed
\end{proof}

When conditions in Theorem~\ref{thm:function} are fulfilled, we can derive a function basis and even an irreducible function basis from an integrity basis or minimal integrity basis. A function basis derived from an integrity basis is called a \textit{polynomial function basis}, and an irreducible function basis derived from a minimal integrity basis is called an \textit{irreducible polynomial function basis}. In the following, we will give a lower bound for the cardinality of a polynomial function basis.

Since $\mathbb R[V]^G$ is finitely generated \cite[Theorem~11.114]{v} and has no nilpotent elements, it follows from \cite[Theorem~1.3]{s77} that
that $V/G$ is a (quotient) variety. It is the variety determined by the coordinate ring $\mathbb R[V]/(\mathbb R[V]^G)$.

\begin{theorem}[The Cardinality Theorem]\label{thm:card}
Let $G$ be a compact group of dimension $d$ and $V$ a finite dimensional real linear vector space representing $G$ of dimension $n>d$. Then, any polynomial function basis has cardinality being not smaller than $n-d$.
\end{theorem}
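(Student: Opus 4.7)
The plan is to embed the orbit space $V/G$ into $\mathbb{R}^r$ using the polynomial function basis, and then extract the cardinality bound from a dimension count. Fix any polynomial function basis $\mathcal{P}=\{p_1,\dots,p_r\}$; by definition this comes from an integrity basis of $\mathbb{R}[V]^G$ through the construction in Theorem~\ref{thm:function}. Form the polynomial map
\[
\mathbb{P}\colon V\to\mathbb{R}^r,\qquad v\mapsto (p_1(v),\dots,p_r(v))^{\mathsf{T}},
\]
whose image $\mathbb{P}(V)$ was already shown in the proof of Theorem~\ref{thm:function} to be in bijective correspondence with $V/G$ via the separability Lemma~\ref{lem:sep}.

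The argument then proceeds in three steps. First, I would identify $\dim\mathbb{P}(V)$ with $\dim(V/G)$. Since $p_1,\dots,p_r$ generate $\mathbb{R}[V]^G$ as an $\mathbb{R}$-algebra, the ring homomorphism $\mathbb{R}[y_1,\dots,y_r]\to\mathbb{R}[V]^G$ sending $y_i\mapsto p_i$ is surjective, so the Zariski closure of $\mathbb{P}(V)$ in $\mathbb{R}^r$ has coordinate ring $\mathbb{R}[V]^G$; by the paragraph immediately preceding the theorem statement, this Zariski closure is precisely the quotient variety $V/G$, and has the same dimension as the quotient manifold. Second, trivially $\dim\mathbb{P}(V)\leq r$ since $\mathbb{P}(V)\subseteq\mathbb{R}^r$. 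Third, Lemma~\ref{lem:dimension} supplies $\dim(V/G)\geq n-d$; the connectedness/compactness argument there uses only that $G$ is a compact Lie group with identity component a smooth connected manifold of dimension $d$, and that the action is not everywhere trivial (if it were trivial, then $V/G=V$ has dimension $n>n-d$ and a polynomial function basis must separate the $n$ coordinates, so $r\geq n>n-d$ holds trivially). Chaining the three inequalities gives
\[
r\geq \dim\mathbb{P}(V)=\dim(V/G)\geq n-d,
\]
which is the desired bound.

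The main obstacle I anticipate lies in the first step, the identification $\dim\mathbb{P}(V)=\dim(V/G)$. One has to pass carefully between the differentiable category, in which $V/G$ is the smooth quotient manifold of dimension $\dim V-\dim[v]$ at a principal point $v$, and the algebraic category, in which $V/G$ is the real affine variety $\operatorname{Spec}\mathbb{R}[V]^G$ and $\mathbb{P}(V)$ is a semi-algebraic subset of $\mathbb{R}^r$ whose Zariski closure has Krull dimension equal to that of $\mathbb{R}[V]^G$. Reconciling the two notions of dimension is exactly the place where compactness of $G$ (closed orbits, separation of orbits by invariants via Lemma~\ref{lem:sep}) and finite generation of $\mathbb{R}[V]^G$ must be combined; once this reconciliation is done, the remaining ingredients are the elementary containment $\mathbb{P}(V)\subseteq\mathbb{R}^r$ and the orbit-dimension bound from Lemma~\ref{lem:dimension}.
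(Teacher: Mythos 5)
Your overall strategy is the paper's (map the orbit space into $\mathbb{R}^{r}$ by the invariants and count dimensions), but the link on which your chain $r\ge\dim\mathbb{P}(V)=\dim(V/G)\ge n-d$ hangs is exactly the step you leave unproven. Nothing in your write-up establishes $\dim(V/G)\le\dim\overline{\mathbb{P}(V)}$ (which is all you need; the equality is more than needed): identifying the coordinate ring of $\overline{\mathbb{P}(V)}$ with $\mathbb{R}[V]^{G}$ only tells you that $\overline{\mathbb{P}(V)}$ is the quotient variety, and the reconciliation of its (real/Krull) dimension with the dimension of the quotient \emph{manifold} $V/G$ is precisely the crux, which you flag as ``the main obstacle'' and defer with ``once this reconciliation is done.'' The paper does not prove that identification either; it avoids needing it. Its proof uses that the $p_i$ separate orbits (every polynomial invariant is a function of the $p_i$, and $\mathbb{R}[V]^{G}$ separates orbits by Lemma~\ref{lem:sep}), so $\mathcal{P}\colon V/G\to\overline{\mathcal{P}(V/G)}$ is an injective dominant map; if $r<n-d\le\dim(V/G)$, the fibre-dimension theorem for dominant morphisms forces fibres of dimension at least $n-d-r\ge 1$, contradicting injectivity. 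In short, the paper trades your unproven dimension equality for separability plus the fibre-dimension theorem --- separability is the ingredient your argument never invokes, and some form of it (e.g.\ bounding the fibres of $\mathbb{P}\colon V\to\overline{\mathbb{P}(V)}$ by the orbit dimension $d$) is what you would need to supply to close the gap.

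There is also a scope problem. You assume $p_1,\dots,p_r$ generate $\mathbb{R}[V]^{G}$ as an algebra, so that $\mathbb{R}[y_1,\dots,y_r]\to\mathbb{R}[V]^{G}$ is onto; that is only valid if the polynomial function basis is literally an integrity basis. The paper's proof uses only the weaker property that every element of $\mathbb{R}[V]^{G}$ is a \emph{function} of the $p_i$, and this weaker form is what is actually needed downstream: in Proposition~\ref{prop:33} the bound is applied to an irreducible function basis extracted from a minimal integrity basis, i.e.\ a priori to a proper subset of an integrity basis, which still separates orbits but need not generate $\mathbb{R}[V]^{G}$ as a ring. So even after repairing the dimension step, your argument as written proves a statement too weak to support the paper's application.
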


\begin{proof}
Let $\{p_1,\dots,p_r\}\subset P[V]^{G}$ be a polynomial function basis. We must have that for each pair $u,v\in V$
\[
p_i(u)=p_i(v)\ \text{for all }i\in\{1,\dots,r\}
\]
will implies
\[
[u]=[v],
\]
since each polynomial in $P[V]^{G}$ is a function of $p_1,\dots,p_r$, and $P[V]^G$ separates the orbits of $V/G$ \cite{w}.

We therefore have that the mapping
\[
\mathcal P : V/G\rightarrow \mathbb R^r
\]
given by
\[
\mathcal P([v])=(p_1(v),\dots,p_r(v))^\mathsf{T}
\]
is a one to one regular map. Obviously, we can consider the mapping
\[
\mathcal P : V/G\rightarrow \overline{\mathcal P(V/G)}\subseteq\mathbb R^r
\]
whenever $\mathcal P$ is not dominant. Now, the map
\[
\mathcal P : V/G\rightarrow \overline{\mathcal P(V/G)}
\]
is a dominant morphism. Then, if $r<n-d\leq \operatorname{dim}(V/G)$, each fibre of $\mathcal P^{-1}(\mathbf y)$ for $\mathbf y\in \mathcal P(V/G)$ will have dimension at least $\operatorname{dim}(V/G)-\operatorname{dim}(\overline{\mathcal P(V/G)})\geq n-d-r\geq 1$ \cite[Proposition~6.3]{b}. This contradicts the separability of the set $\{p_1,\dots,p_r\}$ on the orbits of $V/G$ immediately.
\qed
\end{proof}

\section{An Application to Applied Mechanics}\label{sec:app}
In this section, we give an application of the lower bound theorem for function basis in applied mechanics.

Third order three dimensional symmetric and traceless tensors are of fundamental importance in physics such as liquid crystal, etc. In the literature, Smith and Bao\cite{sb} derived minimal integrity bases for the space of third order three dimensional symmetric and traceless tensors. Their minimal integrity bases have four polynomial invariants, of degrees two, four, six, and eight respectively. While, for a long time, it is unclear whether these minimal integrity bases are also irreducible function bases or not. In very recent, Chen, Qi and Zou \cite{cqzz} applied an intelligence method proving that the Smith-Bao minimal integrity basis is indeed an irreducible function basis. In the following, we will strengthen this result, and show that each minimal integrity basis of the space $\operatorname{St}(3,3)$ is an irreducible function basis.
\begin{proposition}\label{prop:33}
Each minimal integrity basis of $\operatorname{St}(3,3)$ is an irreducible function basis.
\end{proposition}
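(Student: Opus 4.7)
The plan is to combine Theorem~\ref{thm:card} with Theorem~\ref{thm:function} and the classical fact that every minimal integrity basis of a given tensor space under a given group action has the same cardinality.

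First, I would record the two numerical inputs. From the dimension formula stated in Section~\ref{sec:tensor}, $\operatorname{dim}(\operatorname{St}(3,3)) = \binom{5}{2}-\binom{3}{2} = 10-3 = 7$, while the acting groups $\operatorname{O}(3,\mathbb R)$ and $\operatorname{SO}(3,\mathbb R)$ are compact Lie groups of dimension $3$. Taking $V=\operatorname{St}(3,3)$ (so Lemma~\ref{lem:dimension} applies, since $V\supseteq \operatorname{St}(3,3)$), Theorem~\ref{thm:card} then asserts that any polynomial function basis of $\operatorname{St}(3,3)$ has cardinality at least $n-d = 7-3 = 4$.

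Next, I would bring in the Smith--Bao result \cite{sb}, which exhibits a minimal integrity basis of $\operatorname{St}(3,3)$ consisting of exactly four polynomial invariants (of degrees $2,4,6,8$). By the uniqueness of cardinalities and degree lists of minimal integrity bases \cite{s}, every minimal integrity basis of $\operatorname{St}(3,3)$ has exactly four generators. Moreover, by Theorem~\ref{thm:function}, any integrity basis (and in particular any minimal integrity basis) of $\operatorname{St}(3,3)$ is already a function basis, i.e.\ a polynomial function basis in the sense of Section~\ref{sec:function}.

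Finally, I would conclude by contradiction. Let $\mathcal P = \{p_1,p_2,p_3,p_4\}$ be any minimal integrity basis of $\operatorname{St}(3,3)$; by the previous paragraph it is a polynomial function basis of cardinality $4$. If $\mathcal P$ were not an irreducible function basis, some $p_i$ would be a function of the remaining three, and the three-element subset $\mathcal P\setminus\{p_i\}$ would still be a polynomial function basis. This would contradict the lower bound $n-d=4$ from Theorem~\ref{thm:card}. Hence each minimal integrity basis of $\operatorname{St}(3,3)$ is an irreducible function basis. The argument is essentially a one-line dimension count; the only thing to be careful about is matching hypotheses ($V$ finite-dimensional real, $G$ compact, $n>d$), all of which are immediate here, so I do not anticipate a genuine obstacle.
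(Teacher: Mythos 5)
Your proposal is correct and follows essentially the same route as the paper: the lower bound $7-3=4$ from Theorem~\ref{thm:card}, the cardinality $4$ of every minimal integrity basis via \cite{sb,s}, and Theorem~\ref{thm:function} to view the basis as a (polynomial) function basis. Your explicit remove-one-generator contradiction simply spells out what the paper compresses into ``the lower bound equals the upper bound,'' so there is no substantive difference.
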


\begin{proof}
First note that the dimension of $\operatorname{St}(3,3)$ is $7$. Thus, the dimension of $\operatorname{St}(3,3)/\mathbb O(3)$ is at least $4$.  It follows from Theorem~\ref{thm:card} that an irreducible function basis will have cardinality at least $4$.

On the other hand, every minimal integrity basis of $\operatorname{St}(3,3)$ will have the same cardinality $4$ \cite{s}, which is of course an upper bound for the cardinality of irreducible function bases derived from them.

As the lower bound is equal to the upper bound for the cardinality of the irreducible function basis, the conclusion follows.
\qed
\end{proof}

We give a final remark to conclude this article. The lower bound given in Theorem~\ref{thm:card} can be strict. Recently, Chen et. al. \cite{clqzz} gave an irreducible function basis of eleven invariants for the space of third order three dimensional symmetric tensors, for which the lower bound given by Theorem~\ref{thm:card} is seven. While, it is still possible to find an irreducible function basis with the cardinality being the lower bound, which shall be termed a \textit{minimal irreducible function basis}.


\end{document}